\newtheorem{lemma}{Lemma}
\newtheorem{theorem}{Theorem}
\newtheorem{remark}{Remark}
\newtheorem{conjecture}{Conjecture}
\newcommand{\fin}{\hfill $\Box$}
\title {A characterization of the ellipsoid through planar grazes}
\author{I. Gonzalez-Garc\'ia$^{1}$, J. Jer\'onimo-Castro$^{2}$, \\ E. Morales-Amaya$^{3}$,  D. J. Verdusco-Hern\'andez$^{4}$\\ 
\small{$^{1,}$$^{2}$Facultad de Ingenier\'ia}\\
\small{Universidad Aut\'onoma de Quer\'etaro, M\'exico}\\
\small{$^{3,}$$^{4}$Facultad de Matem\'aticas-Acapulco,}\\
\small{Universidad Aut\'onoma de Guerrero, M\'exico}\\
 }
\date{Dedicated to David George Larman}
\begin{document}

\maketitle
\begin{abstract}  
In this paper we proved the following: \emph{Let $K, L\subset \mathbb R^3$ be two $O$-symmetric convex bodies with $L\subset \emph{int} K$ strictly convex. Suppose that from every $x$ in $\emph{bd} K$ the graze $\Sigma(L,x)$ is a planar curve and $K$ is almost free with respect to $L$. Then $L$ is an ellipsoid.}
\end{abstract}

\section{Introduction}
There are characterizations of the ellipsoid which consider properties of the intersections of a convex body $K$ in the Euclidean space $\mathbb R^n$, $n\geq 3$, with cones or cylinders (a \textit{convex body} is a compact and convex set with non-empty interior). Given a point $x\in\mathbb R^n\setminus K$ we denote the cone generated by $K$ with apex $x$ by $\text{C}(K,x)$, i.e., $\text{C}(K,x):=\{x+\lambda (y-x): y\in K,\, \lambda\geq 0\},$ by $\text{S}(K,x)$ the boundary of $\text{C}(K,x)$, in other words, $\text{S}(K,x)$ is the support cone of $K$ from the point $x$ and by $\Sigma(K,x)$ the \emph{graze} of $K$ from $x$, i.e., 
$\Sigma(K,x):=\text{S}(K,x)\cap \partial K.$ A very especial case is when the apexes of the cones are points at infinity. In this case the grazes are called \emph{shadow boundaries} and they are obtained by intersections of $\partial K$ with circumscribed cylinders. The first proof that a convex body is an ellipsoid if and only if every shadow boundary lies in a hyperplane is due to H. Blaschke \cite{Blaschke}.  Burton \cite{burton} shows that if, for a number $\delta>0$, the graze corresponding to every point in $\mathbb R^n \backslash K$ whose distance from $K$ is less than $\delta$ is contained in a hyperplane, then $K$ is an ellipsoid. In the extraordinary paper \cite{petty} the interested reading can find more results about the ellipsoids.   
 
On the other hand, using the notion of grazes, A. Marchaud proved the following in \cite{Marchaud}: 

\emph{Let $K\subset \mathbb R^3$ be a convex body and $H$ be a plane which is either disjoint from $K$ or meets $K$ at a single point. Then $K$ is an ellipsoid if for every point $x\in H\setminus K$, the graze $\Sigma(K,x)$ contains a planar convex curve $\gamma$ such that $\emph{conv}\, \gamma \cap \emph{int}\, K \neq\emptyset$}.

However, if we change the plane $H$ for a surface $\Gamma$ which encloses  $K$ it is not known whether $K$ is an ellipsoid or not. We suspect the following is true.

\begin{conjecture}\label{penumbras_planas} Let $L\subset \emph{int}\, K\subset\mathbb R^n$ be convex bodies such that for every point $x\in\partial K$ it holds that $\Sigma(L,x)$ lies in a hyperplane. Then $L$ is an ellipsoid.
\end{conjecture}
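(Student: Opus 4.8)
The plan is to recast the planar‑graze hypothesis as an incidence relation between $\bd K$ and a family of planes, to record its polar form, to prove that this relation extends to a projective correlation of $\mathbb{R}^3$, and finally to recognize $\bd L$ as a quadric; the identification with a correlation is the part I expect to be hard.

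First I would set up the reformulation. Since $L$ is strictly convex, $\Sigma(L,x)$ is a simple closed convex curve; if it is planar it spans a unique plane $\Pi(x)$, and one checks that $\Pi(x)$ meets $\inte L$, that $\Sigma(L,x)=\bd(L\cap\Pi(x))=\Pi(x)\cap\bd L$, and that $\mathrm{C}(L,x)$ is exactly the cone with apex $x$ over the section $L\cap\Pi(x)$. Hence, writing $T_q$ for the support plane of $L$ at a smooth point $q\in\bd L$, one obtains the reciprocity
\[
q\in\Pi(x)\iff x\in T_q,
\]
together with $\Pi(-x)=-\Pi(x)$ from $O$‑symmetry. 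It is useful to dualize: if $\xi\colon\bd L\to\bd L^{\circ}$ is the normalized Gauss map (the gauge duality) and $H_x=\{\eta:\langle\eta,x\rangle=1\}$, then $\xi(\Sigma(L,x))=\bd L^{\circ}\cap H_x$, and as $x$ runs over $\bd K$ the planes $H_x$ run over the support planes of $K^{\circ}$; so the hypothesis says precisely that $\xi$ carries the two‑parameter family of planar sections $\{\,\bd L^{\circ}\cap H:\ H\ \text{supports}\ K^{\circ}\,\}$ onto planar curves of $\bd L$ — a condition that holds automatically exactly when $\xi$ is projective, i.e. when $L$ is an ellipsoid. For an ellipsoid, $\Pi$ is just the restriction to $\bd K$ of the polarity defining $L$; our task is to reverse this.

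The core step is to show that, off the exceptional apexes allowed by ``$K$ almost free with respect to $L$'', the assignment $x\mapsto\Pi(x)$ is the restriction of a projective correlation $\pi$ of $\mathbb{R}^3$; equivalently, that the covector $a(x)$ with $\Pi(x)=\{y:\langle a(x),y\rangle=1\}$ extends to a linear map. For bookkeeping I would slice by planes through $O$: for such a plane $E$ and $x\in\bd(K\cap E)$ the line $\Pi(x)\cap E$ is the chord of contact of $x$ with respect to $\bd(L\cap E)$, and all these chords — taken over the central planes through a fixed $x$ — pass through the common point where the line through $O$ and $x$ meets $\Pi(x)$. The planar information carried by any single $E$ is, however, empty in itself; the whole content of the hypothesis is the simultaneity, across all central planes, of the planarity of $\Sigma(L,x)=\bigcup_{E\ni x}\bigl(\Sigma(L,x)\cap E\bigr)$. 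One must therefore couple all these planar configurations, using $a(-x)=-a(x)$ and the compatibility of the chord‑of‑contact maps along the two common points of any two central sections of $K$, to force them to be the traces of one spatial correlation; it is at this stage that one may hope to present a suitable planar configuration to Marchaud's theorem (equivalently, Blaschke's characterization by planar shadow boundaries). Once $\Pi=\pi$ on $\bd K$, the reciprocity $q\in\Pi(x)\iff x\in T_q$ and the (projective) self‑adjointness of $\pi$ force $\pi(q)=T_q$ for every smooth $q\in\bd L$; since $q\in T_q$ this gives $q\in\pi(q)$, so $\bd L$ lies on the quadric of self‑conjugate points of $\pi$, and being a bounded convex surface it must be an ellipsoid (the only bounded quadric surface), so $L$ is an ellipsoid.

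The main obstacle is exactly this promotion of a two‑parameter family of (point, plane) incidences to a correlation of $\mathbb{RP}^3$. One has genuine freedom in the shape of $K$ — hence in the curves $\bd(K\cap E)$ — so the rigidity cannot come from any single section: it must be squeezed out of the global statement that, for each $x$, the one‑parameter family of tangent‑point pairs $\{p_1^{E}(x),p_2^{E}(x)\}$ (whose union over $E$ is $\Sigma(L,x)$) is coplanar, ultimately yielding a functional equation that pins the radial function of $L$ to that of an ellipsoid. The hypothesis that $K$ is almost free with respect to $L$ plays the expected supporting role: it discards the finitely many (or negligibly many) apexes at which $\Pi(x)$ degenerates — for instance passes through $O$, or is not uniquely determined — so that $x\mapsto\Pi(x)$ is smooth and nondegenerate on a dense subset of $\bd K$ and the exceptional apexes are recovered by continuity.
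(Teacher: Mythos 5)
The statement you are addressing is Conjecture~\ref{penumbras_planas}, which the paper explicitly leaves \emph{open}: what is actually proved is Theorem~\ref{moledeolla}, a special case in $\mathbb{R}^3$ under three substantial extra hypotheses ($K$ and $L$ are $O$-symmetric, $L$ is strictly convex, and $K$ is almost free with respect to $L$). Your proposal does not close this gap. Its central step --- that the coplanarity of every graze forces the assignment $x\mapsto\Pi(x)$ to extend to a projective correlation of $\mathbb{R}^3$ --- is not argued; it is exactly the whole difficulty, and you acknowledge as much (``one may hope to present a suitable planar configuration to Marchaud's theorem''). That hope is not justified: Marchaud's theorem requires the apexes of the cones to range over a \emph{plane} disjoint from the body, and the paper points out in the introduction that it is unknown whether the conclusion survives when the plane is replaced by a surface enclosing the body --- which is precisely the situation here, with apexes on $\bd K$. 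So the one external result you invoke to do the heavy lifting does not apply. The surrounding reductions (the reciprocity $q\in\Pi(x)\iff x\in T_q$, the dual reformulation via the Gauss map, and the final step that a correlation with $\pi(q)=T_q$ forces $\bd L$ onto a quadric) are all sound but standard; none of them touches the rigidity question.

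A second, independent problem is that your argument silently imports the hypotheses of Theorem~\ref{moledeolla} into the conjecture: you use $\Pi(-x)=-\Pi(x)$ and $a(-x)=-a(x)$ (which require $O$-symmetry of both bodies), you invoke ``the exceptional apexes allowed by $K$ almost free with respect to $L$,'' and you assume $L$ strictly convex so that $\Sigma(L,x)$ is a curve. None of these is available in the conjecture as stated, which concerns arbitrary nested convex bodies in $\mathbb{R}^n$. For comparison, the paper's proof of the special case follows a quite different, more hands-on route: it first shows each graze is centrally symmetric about $\ell(x,-x)\cap\Delta_x$ (via the curve $\Omega_x=S(L,x)\cap S(L,-x)$, affine diameters, and Hammer's theorem), then shows the planes $\Delta_x$ for $x\in u^{\perp}\cap\bd K$ share a common direction $v(u)$, then exhibits enough lines of affine symmetry to conclude each graze is an ellipse, and only then cites an earlier quadric-recognition result. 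Every one of those steps leans on the symmetry, strict convexity, and almost-freeness assumptions, which is why the general conjecture remains open; your proposal would need a genuinely new idea at the correlation step to go beyond that.
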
 
 
With the additional condition that the grazes are ellipses, it was proved in \cite{Ivan_et_al} that $L$ is an ellipsoid. In this work we give another progress in order to prove this conjecture, and it was very unexpected that under the hypotheses of Theorem \ref{moledeolla} the grazes of the body $L$ result to be centrally symmetric (Lemma \ref{jesus}). Before we give the statement of the main result in this work, we give some more definitions and notation. Let $L$ and $K$ be two $O$-symmetric convex bodies in $\mathbb R^3$, with $L\subset \text{int} K$. We say that the points $x,y\in \text{bd} K$ are  \textit{free with respect to $L$} if the line through $x$ and $y$, $\ell (x,y)$, does not meet $L$. Suppose that for every point $x\in\text{bd} K$, the graze $\Sigma(L,x)$ is a planar curve and denote the plane where it is contained by $\Delta_x$. The body $K$ is said to be \textit{almost free with respect to $L$} if for each $z\in \text{bd} K$  and $w\in \Pi_z \cap \text{bd} K$, where $\Pi_z$ is a plane through  $O$ parallel to $\Delta_z$, the points $z$ and $w$ are free with respect to $L$ (see Fig. \ref{reik}).

\begin{figure}[H]
\centering
\includegraphics [width=.63\textwidth]{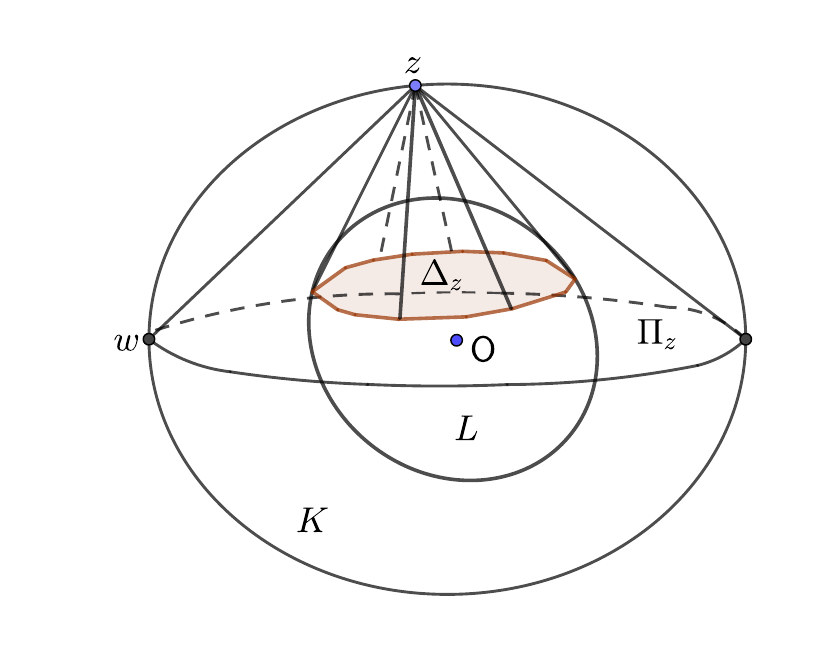}
\caption{$K$ is almost free with respect to $L$}
\label{reik}
\end{figure}

The main result of this article is the following.
 
\begin{theorem}\label{moledeolla}
Let $K, L\subset \mathbb R^3$ be two $O$-symmetric convex bodies with $L\subset \emph{int} K$ strictly convex. Suppose that from every $x$ in $\emph{bd} K$ the graze $\Sigma(L,x)$ is a planar curve and $K$ is almost free with respect to $L$. Then $L$ is an ellipsoid.  
\end{theorem}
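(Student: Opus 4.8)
The plan is to run the proof in three stages: an affine--projective reduction, the central-symmetry lemma, and a propagation argument that yields the ellipsoid.

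\emph{Stage 1: reduction.} Fix $x\in\bd K$. Since $L$ is strictly convex and $\Sigma(L,x)$ is a non-degenerate planar closed curve lying on $\bd L$, the plane $\Delta_x$ meets $\inte L$ and $\Sigma(L,x)=\bd L\cap\Delta_x$ is the relative boundary of the planar section $L\cap\Delta_x$; moreover the support plane of $L$ at each point of $\Sigma(L,x)$ passes through $x$, so $\Delta_x$ is precisely the polar plane of $x$ with respect to $L$, and $x$ its pole. Hence the planes $\Delta_x$, $x\in\bd K$, are support planes of the polar body $\widetilde K$ of $K$ with respect to $L$, and $\widetilde K\subset\inte L$. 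Let $\pi_x$ be the central projection from $x$ onto the central plane $\Pi_x$. Because $\Sigma(L,x)\subset\Delta_x$ and $\Delta_x\parallel\Pi_x$, the restriction of $\pi_x$ to $\Delta_x$ is a homothety with centre $x$, so the shadow $\pi_x(L)$ of $L$ cast from $x$ onto $\Pi_x$ is a homothetic copy of $L\cap\Delta_x$; and the point $c_x:=\ell(O,x)\cap\Delta_x$ (which lies between $O$ and $x$, as the polar plane of $x$ separates $x$ from $O$) is carried by $\pi_x$ to $O$. Consequently $\Sigma(L,x)$ is centrally symmetric about $c_x$ if and only if $\pi_x(L)=-\pi_x(L)$; and since $-\pi_x(L)=\pi_{-x}(L)$ by the $O$-symmetry of $L$, this is the assertion that the two shadows of $L$ cast onto the common central plane $\Pi_x$ from $x$ and from $-x$ coincide. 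Finally, read through $\pi_x$, the ``almost free'' hypothesis says exactly that $\Pi_x\cap\bd K$ is disjoint from $\pi_x(L)$; since $O\in\pi_x(L)$ this forces $\pi_x(L)$, and likewise $\pi_{-x}(L)$, to lie strictly inside the central section $\Pi_x\cap K$.

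\emph{Stage 2: Lemma \ref{jesus}.} One must prove that every graze $\Sigma(L,x)$ is centrally symmetric, equivalently that $\pi_x(L)=\pi_{-x}(L)$ for all $x\in\bd K$. As $\pi_{-x}(L)=-\pi_x(L)$, the content is to rule out asymmetry of the single convex body $\pi_x(L)$ using only exterior ``free-lines'' information. The plan is to invoke the ``almost free'' condition not merely at the antipodal pair $\pm x$ but along a one-parameter family of auxiliary apexes $z\in\bd K$ whose planes $\Pi_z$ turn about the line $\ell(O,x)$; for each such $z$ one records the incidences among the tangent lines of $L$ through $z$, the support cone $\mathrm{S}(L,z)$, and $\bd K$, and converts them --- via the homotheties of Stage 1 --- into coincidences between values of the support functions of $\pi_x(L)$ and $-\pi_x(L)$, until the two bodies are forced to coincide. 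I expect this to be the main obstacle of the whole argument: there is no \emph{a priori} reason for a condition phrased entirely in terms of lines that miss $L$ to impose central symmetry on the shadows of $L$, and this is precisely the unexpected phenomenon flagged in the Introduction.

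\emph{Stage 3: from symmetric grazes to the ellipsoid.} By Lemma \ref{jesus} we are now in the following situation: $\widetilde K\subset\inte L$ is $O$-symmetric, and every support plane $\Delta$ of $\widetilde K$ meets $L$ in a section centrally symmetric about the point where $\ell(O,\text{pole of }\Delta)$ meets $\Delta$ --- an Olovjanishnikov--Bianchi--Gruber type configuration in the $O$-symmetric category, with the centres of the sections located explicitly. From here the plan is a False-Centre-Theorem-style propagation: for a fixed direction $u$ the two support planes of $\widetilde K$ with normal $u$ give two centrally symmetric parallel sections of $L$, and by letting two support planes of $\widetilde K$ share a common chord of $L$ and comparing the central symmetries they induce on the corresponding sections, one enlarges the two-parameter family $\{L\cap\Delta_x\}$ of centrally symmetric sections --- first to a concurrent family of centrally symmetric sections of $L$, and then, using that the $O$-symmetry of $L$ already furnishes all central sections, to \emph{every} plane section of $L$; a convex body in $\RR^3$ all of whose plane sections are centrally symmetric is an ellipsoid. (Alternatively the same propagation upgrades the grazes from centrally symmetric curves to ellipses, after which $L$ is an ellipsoid by \cite{Ivan_et_al}.) The technical heart of this stage is the propagation itself --- controlling how the centres $c_x$ of neighbouring sections fit together --- and, together with Lemma \ref{jesus}, it carries the weight of the proof.
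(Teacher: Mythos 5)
Your Stage 1 is a correct and even illuminating reformulation (the planes $\Delta_x$ are indeed the polar planes of the points $x$ with respect to $L$, and central symmetry of $\Sigma(L,x)$ about $O_x$ is equivalent to the coincidence of the shadows cast from $x$ and $-x$ onto $\Pi_x$), but Stages 2 and 3 are announcements of intent rather than arguments: you explicitly defer both the proof that the grazes are centrally symmetric and the passage from symmetric grazes to the ellipsoid, and these are exactly the two places where all the work lies. As written, the proposal therefore has genuine gaps at both of its load-bearing steps.

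Concretely: for Stage 2 your plan --- extracting central symmetry of $\pi_x(L)$ from the ``almost free'' hypothesis applied along a family of auxiliary apexes --- points in the wrong direction, since the paper proves Lemma \ref{jesus} without using the almost-free hypothesis at all. The device you are missing is the curve $\Omega_x:=S(L,x)\cap S(L,-x)$, a simple closed curve (Lemma 2.2 of \cite{Jero_McAlly}) which is $O$-symmetric because $L$ is; one shows that every affine diameter $[a,b]$ of $\Sigma(L,x)$ passes through $O_x$ by transporting a support line of $\Omega_x$ at $z=\ell(x,a)\cap\Omega_x$ to its antipode $-z$, taking the support plane of $L$ through $-z$ and $x$, and intersecting it with $\Delta_x$; Hammer's theorem \cite{Hammer} then yields the centre. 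For Stage 3, your proposed False-Centre-style propagation to ``all plane sections of $L$ are centrally symmetric'' is not carried out, and it is not the route the paper takes: the almost-free hypothesis enters exactly here, in Lemma \ref{diana}, to guarantee that for $x$ and $y$ in a common central section the grazes $\Sigma(L,x)$ and $\Sigma(L,y)$ share a chord, whence all planes $\Delta_x$ with $x\in u^{\perp}\cap\bd K$ are parallel to a single direction $v(u)$; this is then converted, via a criterion from \cite{Jeronimo} on support lines at the endpoints of parallel chords meeting on a common line, into the statement that each $\Sigma(L,x)$ has a line of affine symmetry in every direction, hence is an ellipse, and one concludes with Theorem 5 of \cite{Ivan_et_al}. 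Without a substitute for these two constructions the outline cannot be completed as stated.
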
   

A very important problem in Geometric Tomography is to establish properties of a given convex body if we know some properties over its sections or its projections. An interesting conjecture was proposed by G. Bianchi, and P. M. Gruber \cite{Bianchi}: \emph{Let $K$ be a convex body in $\mathbb R^n,$ $n\geq 3$, and let $\delta$ be a continuous real function on $\mathbb S^{n-1}$ such that for each vector $u\in \mathbb S^{n-1}$ the hyperplane $\{x: \langle x, u\rangle = \delta(u)\}$ intersects the interior of $K$. If any such intersection is centrally symmetric and (with, possibly, a few exceptions) does not contain a possibly existing centre of $K$, then $K$ is an ellipsoid.}
 
It is worth to notice that Theorem \ref{moledeolla} is not only evidence for the veracity of Conjecture \ref{penumbras_planas}. Once we have the conclusion of Lemma \ref{jesus} we arrive at particular case of Bianchi and Gruber's conjecture mentioned before, and Theorem \ref{moledeolla} gives a positive answer to these cases. 
 
\section{Main result}
We first prove two lemmas.

\begin{lemma}\label{jesus}
For each $x\in \emph{bd} K$ the graze $\Sigma(L,x)$ is centrally symmetric with centre at the point $O_x:=\ell (x,-x)\cap \Delta_x$.
\end{lemma}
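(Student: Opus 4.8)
\emph{Plan.} The idea is to replace the statement by an equivalent one about a central plane section of the support cone of $L$, and then to bring in the ``almost free'' hypothesis.

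One first notes that $\Sigma(L,x)$ is exactly the relative boundary of the planar section $L\cap\Delta_x$. Indeed, as $L$ is strictly convex and $x\notin L$, the graze $\Sigma(L,x)$ is a simple closed curve in $\bd L$; being planar it is contained in the Jordan curve $\bd L\cap\Delta_x=\bd(L\cap\Delta_x)$, and a simple closed curve contained in another one must coincide with it. One also checks that $\Delta_x$ separates $O$ from $x$ and that $O_x\in\rint(L\cap\Delta_x)$: the first point in which the ray from $x$ to $O$ meets $\bd L$ lies strictly on the $x$-side of $\Delta_x$. Hence the lemma reduces to the assertion that the planar convex body $L\cap\Delta_x$ is symmetric about $O_x$.

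Next one slices the support cone. $S(L,x)$ is the cone with apex $x$ over the planar curve $\Sigma(L,x)\subset\Delta_x$; intersecting it with the plane $\Pi_x$ gives a homothetic copy of $\Sigma(L,x)$, and the homothety, centred at $x$, carries $O_x$ to $O$. So $L\cap\Delta_x$ is symmetric about $O_x$ if and only if $S(L,x)\cap\Pi_x$ is symmetric about $O$. Since the $O$-symmetry of $L$ gives $\Sigma(L,-x)=-\Sigma(L,x)$, whence $\Pi_{-x}=\Pi_x$ and $S(L,-x)=-S(L,x)$, this is equivalent to
\[
S(L,x)\cap\Pi_x=S(L,-x)\cap\Pi_x ,
\]
i.e. the support cones of $L$ from $x$ and from $-x$ meet the central plane $\Pi_x$ in the same curve.

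To obtain this equality one proves the inclusion $\subseteq$ (the reverse following by exchanging $x$ and $-x$). Fix $q\in S(L,x)\cap\Pi_x$, so $\ell(x,q)$ is tangent to $L$; one must show $\ell(-x,q)$ is tangent to $L$. Pass to the plane $\Pi:=\aff\{x,-x,q\}$, which contains $O$: then $L\cap\Pi$ is an $O$-symmetric strictly convex planar body, $K\cap\Pi$ is an $O$-symmetric convex body enclosing it, and $q$ lies on the line $\Pi\cap\Pi_x$, which passes through $O$ and is parallel to the chord of contact of $x$ with respect to $L\cap\Pi$ (the two tangency points from $x$ lie in $\Sigma(L,x)\subset\Delta_x$ and $\Pi_x\parallel\Delta_x$). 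In $\Pi$ the conclusion ``$\ell(-x,q)$ tangent'' becomes the planar statement that the two tangent lines from $x$ to $L\cap\Pi$ meet the line through $O$ parallel to the chord of contact of $x$ in two points symmetric about $O$. This is where the almost-free hypothesis is used: it provides, for each boundary point $z$ of $K\cap\Pi$, a pair of lines through $z$ --- those joining $z$ to the two points in which $\bd(K\cap\Pi)$ meets the line through $O$ parallel to the chord of contact of $z$ --- each of which misses $L\cap\Pi$; letting $z$ range over $\bd(K\cap\Pi)$ and using the strict convexity of $L$ one pins down the position of $x$ relative to $\bd(L\cap\Pi)$ enough to force the planar statement. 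Thus $\ell(-x,q)$ is tangent to $L$; combining the two inclusions and the previous step, $\Sigma(L,x)$ is centrally symmetric about $O_x$.

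The crux is the last step. The planar statement on which it relies is \emph{false} for a general $O$-symmetric convex body --- already for a square the two tangent lines from a generic exterior point are not balanced this way --- so the argument must genuinely use the strict convexity of $L$ together with the fact that the almost-free condition holds at \emph{every} boundary point of $K$. Extracting the pointwise symmetry from this global information, presumably by a continuity or connectedness argument as $z$ traverses $\bd(K\cap\Pi)$, is the heart of the proof.
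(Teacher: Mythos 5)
Your reduction is sound as far as it goes: identifying $\Sigma(L,x)$ with $\bd L\cap\Delta_x$, slicing the support cone by the central plane $\Pi_x$, and converting the lemma into the equality $S(L,x)\cap\Pi_x=S(L,-x)\cap\Pi_x$ are all correct steps (and that equality is in fact exactly what the paper's Lemma 2 asserts, namely that $\Omega_x:=S(L,x)\cap S(L,-x)$ is a planar curve, which the paper deduces \emph{from} the present lemma rather than the other way round, so reversing the order is legitimate in principle). But the proof stops precisely where it has to deliver: the planar statement about the two tangent lines from $x$ to $L\cap\Pi$ meeting the line through $O$ in antipodal points is, as you yourself say, false for a general $O$-symmetric convex body, and the paragraph that is supposed to rescue it (``letting $z$ range over $\bd(K\cap\Pi)$ \dots presumably by a continuity or connectedness argument \dots is the heart of the proof'') is a description of a hoped-for argument, not an argument. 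That is a genuine gap, and it sits exactly at the crux of the lemma.

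The diagnosis of what fills the gap is also off. The paper's proof of this lemma does not use the almost-free hypothesis at all (that hypothesis only enters later, in Lemma 3); what it uses is the \emph{planarity of the graze itself} together with the $O$-symmetry of $L$, exploited through the three-dimensional curve $\Omega_x=S(L,x)\cap S(L,-x)$. By a cited result of Jer\'onimo-Castro and McAllister this is a simple closed curve, and it is $O$-symmetric. Given $a\in\Sigma(L,x)$ and the point $z=\ell(x,a)\cap\Omega_x$, the support plane of $L$ along $\ell(x,a)$ cuts $\Delta_x$ in a support line of $\Sigma(L,x)$ at $a$ (this is where planarity of the graze is essential), and the $O$-symmetry of $\Omega_x$ transports this to a support plane along $\ell(x,-z)$, whose trace on $\Delta_x$ is a parallel support line on the opposite side; strict convexity then forces the second endpoint of the affine diameter through $a$ to lie on $\ell(x,-z)$, so every affine diameter of $\Sigma(L,x)$ passes through $O_x$, and Hammer's theorem on concurrent affine diameters gives the central symmetry. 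By restricting to the two-dimensional section $L\cap\Pi$ you discard precisely the information (planarity of the full graze, symmetry of $\Omega_x$) that makes the ``balanced tangents'' statement true, which is why you are left needing a planar claim that no local hypothesis in $\Pi$ can supply.
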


\begin{proof}
Let $\Omega_x:=S(L,x)\cap S(L,-x)$. By Lemma 2.2 in \cite{Jero_McAlly} we have that $\Omega_x$ is a simple and closed curve, moreover, since $L$ is centred at $O$ we have that $\Omega_x$ is centrally symmetric with centre at $O$. Let $a\in \Sigma(L,x)$ be any point and let $z$ be the point where the line $\ell (x,a)$ intersects $\Omega_x$ (see Fig. \ref{penumbras}). Consider the point $b\in \Sigma(L,x)$ such that $[a,b]$ is an affine diameter of $\Sigma(L,x)$. Suppose that $O_x\not \in [a,b]$ and let $a'$ be the point where $[-z,x]$ intersects $\Sigma(L,x)$. Let $H_a$ be a support plane of $L$ through the points $x$ and $z$, and let $\ell_a:=\Delta_x\cap H_a$, and let $\ell_z$ be the support lines of $\Sigma(L,x)$ and $\Omega_x$, through $a$ and $z$, respectively, with $\ell_z$ parallel to $\ell_a$. Let $\ell_b$ be the support line of $\Sigma(L,x)$ through $b$ and parallel to $\ell_a$ (this line exists since $[a,b]$ is an affine diameter of $\Sigma(L,x)$), and let $H_b$ be the support plane of $L$ through $\ell_b$ and $x$. 
\begin{figure}[H]
\centering
\includegraphics [width=.6\textwidth]{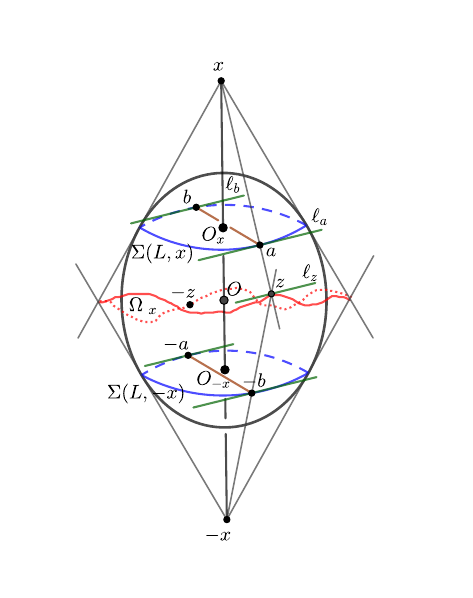}
\caption{$\Omega_x$ is a planar and closed curve}
\label{penumbras}
\end{figure}
Now, since $\Omega_x$ is an $O$-symmetric set, the line $-\ell_z$ is a support line of $\Omega_x$ through $-z$. The plane $H_{a'}$ through $-\ell_z$ and $x$ contains the line $\ell(x,-z)$, hence, the line $\ell_{a'}:=\Delta_x \cap H_{a'}$ is a support line of $\Sigma(L,x)$, parallel to $\ell_b.$ This can happen only if $a'=b$, otherwise we obtain that one of $H_{a'}$ or $H_b$ is not a support plane of $L$. We have proved that the affine diameter $[a,b]$ passes through $O_x$, and the same happens for any other affine diameter of $\Sigma(L,x)$, by a theorem of P. C. Hammer \cite{Hammer} we have that $\Sigma(L,x)$ has a centre of symmetry at $O_x$.
\end{proof}

\begin{lemma}\label{Omega_plana}
For every $x\in \emph{bd} K$ we have that $\Omega_x$ is a planar curve parallel to $\Sigma(L,x)$.
\end{lemma}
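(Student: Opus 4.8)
The plan is to locate each point of $\Omega_x$ precisely on its generator of the cone $S(L,x)$ and to recognise that the assignment $a\mapsto z(a)$ is a single homothety centred at $x$. For $a\in\Sigma(L,x)$ let $z(a)$ be the point in which the generator $\ell(x,a)$ meets $\Omega_x$; since $z(a)\in S(L,x)$ and $z(a)\neq x$, write $z(a)=x+\lambda(a)(a-x)$ with $\lambda(a)>0$. The first thing to establish is the identity
\[
-z(a)=z(2O_x-a),
\]
which comes from the geometry already used in the proof of Lemma~\ref{jesus}: the point $-z(a)$ lies on $\Omega_x$ (which is $O$-symmetric), hence on some generator $\ell(x,a^{*})$ of $S(L,x)$; central projection from $x$ onto $\Delta_x$ carries $z(a),-z(a),O$ to $a,a^{*},\ell(x,-x)\cap\Delta_x=O_x$ and sends lines to lines, so $a,a^{*},O_x$ are collinear; since $a^{*}=a$ would put $z(a),-z(a),x$ on one line (which one checks is impossible), and $\Sigma(L,x)$ is centrally symmetric about $O_x$ by Lemma~\ref{jesus}, the chord of $\Sigma(L,x)$ through $a$ and $O_x$ forces $a^{*}=2O_x-a$.

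Now I would feed this into an elementary computation. Two observations: $O_x$ lies on the line $\ell(x,-x)=\ell(x,O)$, so with $O$ taken as origin $O_x=t\,x$ for a scalar $t$; and $x$ and $a$ are linearly independent for every $a\in\Sigma(L,x)$, because $\ell(x,O)$ meets $\Delta_x$ only at $O_x$, and $O_x$ — being the centre of $\Sigma(L,x)$ — is interior to the region bounded by $\Sigma(L,x)$, hence not on $\Sigma(L,x)$. Substituting $z(a)=x+\lambda(a)(a-x)$ and $2O_x-a=2tx-a$ into $-z(a)=z(2O_x-a)$ and collecting terms gives a relation
\[
\bigl(2-\lambda(a)-\lambda(2O_x-a)+2t\,\lambda(2O_x-a)\bigr)\,x+\bigl(\lambda(a)-\lambda(2O_x-a)\bigr)\,a=0 ;
\]
independence of $x$ and $a$ kills both coefficients, the second giving $\lambda(a)=\lambda(2O_x-a)$ and then the first $\lambda(a)(1-t)=1$. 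Hence $\lambda(a)\equiv\tfrac1{1-t}$ is constant (and $t<1$ since $\lambda(a)>0$), so $z(a)=\eta(a)$, where $\eta$ is the homothety with centre $x$ and positive ratio $\tfrac1{1-t}$. This also shows $\ell(x,a)$ meets $\Omega_x$ in the single point $\eta(a)$, so $\Omega_x=\eta\bigl(\Sigma(L,x)\bigr)$ (alternatively: $\eta(\Sigma(L,x))\subseteq\Omega_x$, both are simple closed curves — the latter by Lemma~2.2 of \cite{Jero_McAlly}, as in the proof of Lemma~\ref{jesus} — so they coincide). Since $\eta$ sends the plane $\Delta_x$ to the parallel plane $\eta(\Delta_x)$ and $\eta(O_x)=O$, that plane is $\Pi_x$, the plane through $O$ parallel to $\Delta_x$; therefore $\Omega_x\subset\Pi_x$ is a planar curve parallel to $\Sigma(L,x)$.

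The step I expect to require the most care is the first one — pinning down $-z(a)=z(2O_x-a)$ rigorously. This is where one must re-use (or re-prove) the supporting-plane bookkeeping from the proof of Lemma~\ref{jesus} around the points $a$, $2O_x-a$ and $-z(a)$, verify that the relevant central projections are well defined (no segment passing through $x$), and dispose of the degenerate possibility $a^{*}=a$. Everything downstream is just the linear algebra displayed above, whose only geometric inputs are that $O_x\in\ell(x,O)$ and that $O_x$ lies in the interior of the region bounded by $\Sigma(L,x)$.
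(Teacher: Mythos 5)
Your proof is correct and takes essentially the same route as the paper: both arguments use the fact, extracted from the proof of Lemma~\ref{jesus}, that $-z(a)$ lies on the generator through $2O_x-a$, and both conclude that $\Omega_x$ is the image of $\Sigma(L,x)$ under a homothety centred at $x$ (necessarily carrying $\Delta_x$ to the parallel plane through $O$). The only difference is presentational: where the paper invokes ``elementary geometry'' on the coplanar points $x,a,b,z,-z$ and their midpoints, you carry out the corresponding linear algebra explicitly, which has the mild benefit of making visible that the ratio $\lambda(a)=1/(1-t)$ is independent of $a$ and that $\Omega_x$ coincides with (not merely contains) the homothetic copy.
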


\begin{proof}
We use the notation of Lemma \ref{jesus} and Fig. \ref{penumbras}. Since the points $x,$ $b$, $-z$, $z$, $a$, are coplanar and $O_x$ and $O$ are the midpoints of the segments $[b,a]$ and $[-z,z]$, respectively, by elementary Geometry we have that $[-z,z]$ must be parallel to $[b,a]$. It follows that $\Omega_x$ is parallel to $\Sigma(L,x)$, indeed, they are homothetic with the centre of homothety at $x$.
\end{proof}

\begin{remark}
If $K$ is a Euclidean ball then we can prove at this point that $L$ is also a Euclidean ball. It is not difficult to prove that for every $x\in \emph{bd} K$, by Lemma \ref{Omega_plana}, the intersection $S(L,x)\cap S(L,-x)$ is a planar curve contained in $x^{\bot}$. It was proved in \cite{Ivan_et_al} (Theorem 2) that under this condition the body $L$ is a Euclidean ball.
\end{remark}

\begin{lemma}\label{diana}
For every $u\in \mathbb{S}^2$ there exists $v(u)\in \mathbb{S}^2$ such that for every $x\in u^{\perp}\cap \emph{bd} K$, the plane $\Delta_x$ is parallel to 
$v(u)$.
\end{lemma}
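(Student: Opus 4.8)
The idea is, for fixed $u$, to reduce the planes $\Delta_x$ ($x\in u^\perp\cap\operatorname{bd}K$) to planar data inside $u^\perp$ and then to use almost-freeness to make the resulting family rigid. Fix $u\in\mathbb S^2$, set $\Gamma_u:=u^\perp\cap\operatorname{bd}K$ and $L_u:=L\cap u^\perp$; then $\Gamma_u$ is a closed $O$-symmetric curve and $L_u\subset\operatorname{int}(K\cap u^\perp)$. For $x\in\Gamma_u$, draw inside $u^\perp$ the two support lines of $L_u$ through $x$ and let $y_1(x),y_2(x)$ be the points where they touch $\operatorname{bd}L_u$. A support line of the section $L_u$ at $y_i(x)$ extends to a support plane of $L$ at $y_i(x)$, which then passes through $x$; hence $y_i(x)\in\Sigma(L,x)$. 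Moreover the line $\operatorname{span}(x)$ meets $\operatorname{int}L_u$, so it is not one of these support lines; consequently $\Delta_x\cap u^\perp$ is a line $\ell_x$ not through $O$, $\Sigma(L,x)\cap u^\perp=\{y_1(x),y_2(x)\}$, and $\ell_x=\ell(y_1(x),y_2(x))$ is precisely the chord of contact of $x$ with respect to $L_u$. By Lemma~\ref{jesus}, $O_x$ lies on $\operatorname{span}(x)\cap\Delta_x\subset\ell_x$ and, being the centre of $\Sigma(L,x)$, is the midpoint of $[y_1(x),y_2(x)]$. Running the same argument with an arbitrary $w\perp x$ in place of $u$ shows that $\Delta_x\cap w^\perp$ is the chord of contact of $x$ with respect to $L\cap w^\perp$, again with midpoint $O_x=\operatorname{span}(x)\cap\Delta_x$; so $\Delta_x$ is the unique plane through $O_x$ carrying all these chords. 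In particular the direction of $\Delta_x$ already contains that of $\ell_x$, and the content of the lemma is that the remaining one-parameter freedom --- the inclination of $\Delta_x$ off $u^\perp$ --- varies, as $x$ runs over $\Gamma_u$, so as to keep every $\Delta_x$ parallel to a single direction $v(u)$; equivalently, the normals of $\{\Delta_x:x\in\Gamma_u\}$ are coplanar.

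\textbf{Bringing in almost-freeness.} For $x\in\Gamma_u$ one has $-x\in\Gamma_u$ and $\Pi_{-x}=\Pi_x$, so the hypothesis applied to $x$ and to $-x$ gives that every segment from $\pm x$ to a point of $\Pi_x\cap\operatorname{bd}K$ misses $L$; hence $L$ lies in the interior of the bicone $B_x:=\operatorname{conv}\big((K\cap\Pi_x)\cup\{x,-x\}\big)$, whose waist $K\cap\Pi_x$ lies in the plane $\Pi_x$ parallel to $\Delta_x$. Combining this with Lemma~\ref{Omega_plana} (so $\Omega_x=S(L,x)\cap S(L,-x)\subset\Pi_x$ is a central planar curve) and with the central symmetry of $\Sigma(L,x)\subset\Delta_x$ about $O_x$, one obtains in the plane $u^\perp$ a rigid configuration --- the chord of contact $\ell_x$, its parallel through $O$, the two support rays from $x$, and the (antipodal) intersections of those rays with that parallel --- tying the data of $\Delta_x$ closely to $x$, to $L$, and to the section $K\cap\Pi_x$. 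Since $\Gamma_u$ spans $u^\perp$, it would suffice to take two points $x_1,x_2\in\Gamma_u$ with $\Delta_{x_1}\not\parallel\Delta_{x_2}$, form the line $\Delta_{x_1}\cap\Delta_{x_2}$, and then show --- by continuing the argument around the closed curve $\Gamma_u$ --- that every $\Delta_x$, $x\in\Gamma_u$, is parallel to that line, which is then $v(u)$.

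\textbf{The main obstacle.} The hard part is precisely this propagation around $\Gamma_u$. The planar data inside $u^\perp$ does not by itself fix the inclination of $\Delta_x$, so one is forced to use the planarity of the grazes $\Sigma(L,z)$ at points $z\in\operatorname{bd}K$ \emph{off} $\Gamma_u$; indeed, were only the grazes over $\Gamma_u$ assumed planar, the conclusion could not be expected, being too close to Conjecture~\ref{penumbras_planas}. Thus the argument must track how $\Pi_z$, hence the bicone $B_z$, hence the admissible inclinations of $\Delta_z$, vary as $z$ leaves and returns to $\Gamma_u$, all the while respecting almost-freeness and the $O$-symmetry of $K$ and $L$. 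Controlling this variation --- rather than any single application of the hypotheses --- is where I expect the real difficulty to lie, and it is presumably the point at which all three hypotheses (planarity of every graze, almost-freeness, central symmetry) are used together.
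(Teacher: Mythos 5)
There is a genuine gap: your write-up stops exactly where the proof has to begin. The first paragraph correctly identifies the trace $\Delta_x\cap u^{\perp}$ as the chord of contact of $x$ with respect to $L\cap u^{\perp}$ with midpoint $O_x$, but that only constrains the one direction of $\Delta_x$ that is determined for free by planar data; the lemma is entirely about the \emph{inclination} of $\Delta_x$ out of $u^{\perp}$, and in your second and third paragraphs you describe a ``rigid configuration'' and a propagation scheme around $\Gamma_u$ without deriving any actual constraint from it, explicitly flagging the propagation as the unresolved ``main obstacle.'' You also never produce a candidate for $v(u)$: taking $v(u)$ to be the direction of $\Delta_{x_1}\cap\Delta_{x_2}$ for two points with non-parallel graze planes presupposes such a pair exists and still leaves the whole assertion to be proved. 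So the proposal is a correct framing of the problem plus an accurate diagnosis of its difficulty, but not a proof.

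For comparison, the paper's argument hinges on a comparison point \emph{off} $u^{\perp}$, which is the idea your sketch is missing. One takes $y\in\mathrm{bd}\,K$ with $\Delta_y$ (hence $\Delta_{-y}$) parallel to $u^{\perp}$ and shows that every $\Delta_x$, $x\in u^{\perp}\cap\mathrm{bd}\,K$, is parallel to $\ell(y,-y)$; that direction is $v(u)$. Almost-freeness guarantees that $\Sigma(L,x)\cap\Sigma(L,y)$ spans a genuine chord $\mu\subset\Delta_x\cap\Delta_y$. By Lemma \ref{jesus} one has $\Sigma(L,-y)=\Sigma(L,y)-2O_y$, so $\mu-2O_y$ is a chord of $\Sigma(L,-y)$ lying in $\Delta_{-y}$; on the other hand, since $\Delta_y$ and $\Delta_{-y}$ are equidistant from $u^{\perp}$ and $O_x\in u^{\perp}$ is the centre of $\Sigma(L,x)$, the reflection of $\mu$ in $O_x$ within $\Delta_x$ is another chord of $\Sigma(L,-y)$ of the same length and direction. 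If $\Delta_x$ were not parallel to $\ell(y,-y)$ these chords are distinct, and repeating the construction at $-x$ yields four parallel chords of equal length on $\Sigma(L,-y)$, contradicting the strict convexity of $L$. Your bicone $B_x$ and the curve $\Omega_x$ play no role in this; the decisive tools are the common chord of two grazes (made available by almost-freeness) and the two central symmetries from Lemma \ref{jesus}, neither of which your proposal actually exploits.
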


\begin{proof}
Consider a point $x\in u^{\perp}\cap \text{bd} K$, and let $y, -y\in \text{bd} K$ be such that the planes $\Delta_y,$ and $\Delta_{-y}$ are parallel to $u^{\perp}$. We claim that $\Delta_x$ is parallel to $\ell (y,-y)$. Suppose to the contrary that $\Delta_x$ is not parallel to $\ell (y,-y)$. By virtue of Lemma \ref{jesus} we have that
\begin{eqnarray}\label{mariana}
\Sigma(L,-y)=\Sigma(L,y)-2\cdot O_y.
\end{eqnarray}
Let $\mu:=\text{conv}(\Sigma(L,x)\cap \Sigma(L,y))$ be the chord with extreme points in the intersection $\Sigma(L,x)\cap \Sigma(L,y)$. Notice that the chord 
$\mu$ is well defined since $K$ is almost free with respect to $L$.
By (\ref{mariana}), the chord $\mu-2\cdot O_y$ belongs to $\Delta_{-y}$ but, since $\Delta_x$ is not parallel to $\ell (y,-y)$, this chord is not contained in $\Delta_x$.  On the other hand, by virtue that $\Delta_y$ and $\Delta_{-y}$ are at the same distance from $u^{\perp}$ and given that $\Sigma(L,x)$ has centre at $O_x\in u^{\perp}$, the image $\ell$ of $\mu$, under the central symmetry with respect to $O_x$ restricted to the plane $\Delta_x$, is in $\Delta_{-y}$ (see Fig. \ref{kari}). Applying the same argument for $-x$, it follows that $\Sigma(L,-y)$ has four parallel chords of the same length which contradicts the strict convexity of $L$. Thus $\Delta_x$ is parallel to $\ell (y,-y)$. Finally we define $v(u)$ as the unit vector parallel to $\ell (y,-y)$.
\end{proof}

\begin{figure}[H]
\centering
\includegraphics [width=.95\textwidth]{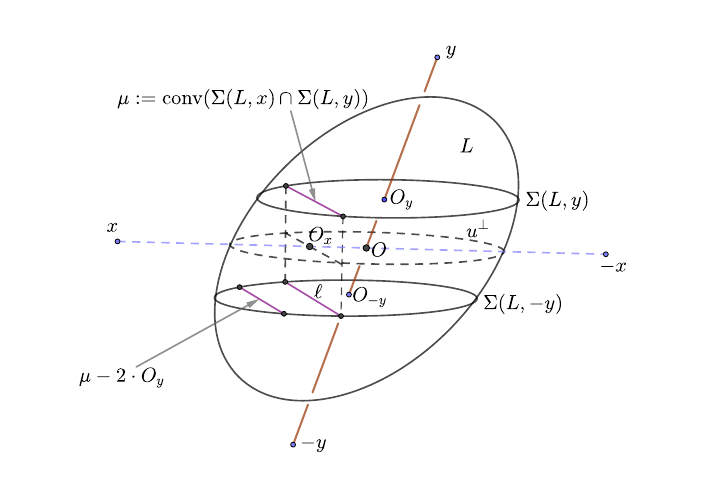}
\caption{$\Delta_x$ is parallel to $\ell (y,-y)$}
\label{kari}
\end{figure}

\emph{Proof of Theorem \ref{moledeolla}.} Let $x$ be a point in $\text{bd} K$ and we define the  unit vector $z=\frac{x}{|x|}$. Let $u \in  z^{\perp} \cap  \mathbb{S}^2$. We are going to prove that $u^{\perp} \cap \Delta_x$ is a line of affine symmetry of $\Sigma(L,x)$. In order to show this, we are going to prove that through the extreme points of the chords of $\Sigma(L,x)$ parallel to $v(u)$, $v(u)$ given by Lemma \ref{diana}, there passes support lines of $\Sigma(L,x)$ that intersect each other in a point in $u^{\perp} \cap \Delta_x$. By this property and since one of the chords pass through the centre of $\Sigma(L,x)$, we have by Lemma 3 in \cite{Jeronimo} that  $u^{\perp} \cap \Delta_x$ is a line of affine symmetry for $\Sigma(L,x)$.

Let $y\in \text{bd} (u^{\perp} \cap K)$ be a point such that line $\ell (x,y)$ is contained in $u^{\perp} \backslash C(L,x)$ (see Fig. \ref{elipse}). 
We denote by $\Gamma_1, \Gamma_2 $ the supporting planes of $L$ containing the line $\ell (x,y)$, by $a,b$ the common points between $\text{bd} L$ and $\Gamma_1, \Gamma_2 $, respectively.
Since $\Gamma_1, \Gamma_2 $ are supporting planes of $C(L,x)$ it follows $a,b\in \Sigma(L,x)$. Analogously we conclude that  $a,b\in \Sigma(L,y)$. Hence $\ell (a,b)=\Delta_x\cap \Delta_y$. By Lemma \ref{diana}, $\Delta_x$ and $\Delta_y$ are parallel to $v(u)$. Thus  the line $\Delta_x \cap \Delta_y$ is parallel to $v(u)$. We denote by $L_1,L_2$ the lines $\Gamma_1\cap \Delta_x, \Gamma_2 \cap \Delta_x$, respectively, and by $c$ the point $\ell (x,y) \cap \Delta_x$. It is clear that $c\in u^{\perp} \cap \Delta_x$ and that $L_1,L_2$ are supporting lines of $\Sigma(L,x)$ passing through $c$ and $a$ and $c$ and $b$, respectively. Varying $y\in \text{bd}(u^{\perp} \cap K)$, such that line $\ell (x,y)$ is contained in $u^{\perp} \backslash C(L,x)$ it follows that $\Sigma(L,x)$ satisfies the required property and, consequently, $u^{\perp} \cap \Delta_x$ is a line of affine symmetry of $\Sigma(L,x)$. Finally varying $u \in (  z^{\perp} \cap \mathbb{S}^2 )$ we conclude that $\Sigma(L,x)$ is an ellipse. Finally, we conclude, using Theorem 5 in \cite{Ivan_et_al}, that $L$ is an ellipsoid. \fin
 
\begin{figure}[H]
\centering
\includegraphics [width=1.0\textwidth]{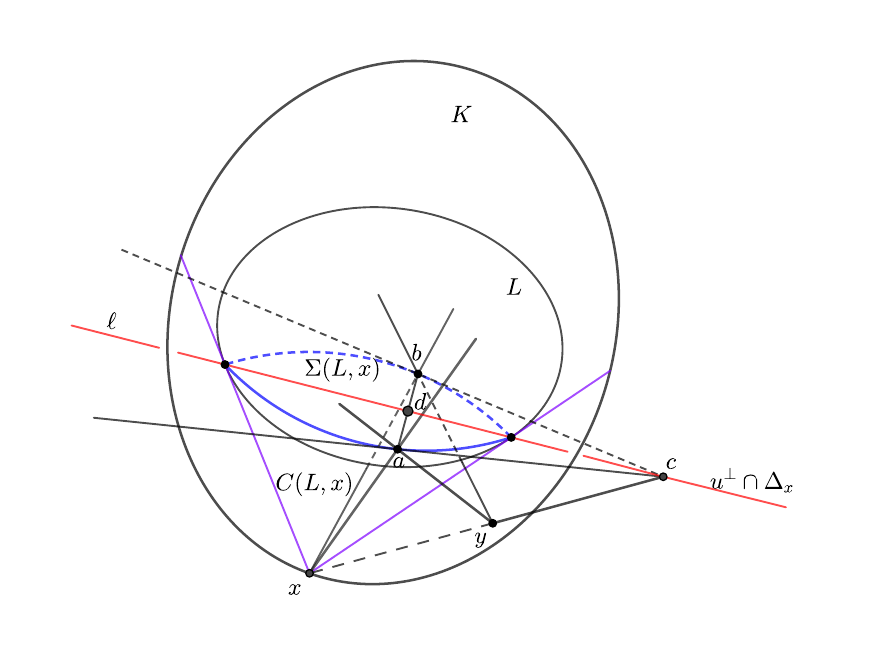}
\caption{$u^{\bot}\cap \Delta_x$ is a line of affine symmetry for $\Sigma(L,x)$}
\label{elipse}
\end{figure}

\end{document}